\newfont{\bb}{msbm10 at 10pt}
\def\rlopezr{\hbox{\bb R}}
\def\rlopezh{\hbox{\bb H}}
\def\rlopezs{\hbox{\bb S}}
\theoremstyle{plain}
\newtheorem{lemma}{Lemma}[section]
\newtheorem{theorem}{Theorem}[section]
\newtheorem{rlopezcorollary}{Corollary}[section]
\theoremstyle{definition}
\title{Parabolic surfaces in hyperbolic space with constant curvature}
\author{
    \emph{Rafael L\'opez}
    \thanks{Partially
supported by MEC-FEDER
 grant no. MTM2004-00109.} \\
    Departamento de Geometr\'{\i}a y Topolog\'{\i}a \\
    Universidad of Granada \\
    18071 Granada, Spain \\
    e-mail: \emph{rcamino@ugr.es} \\
    url: \emph{http://www.ugr.es/local/rcamino}
       }
\date{}
\begin{document}

\maketitle

\begin{abstract}
We study parabolic linear Weingarten surfaces in hyperbolic space
$\rlopezh^3$. In particular, we classify two family of parabolic
surfaces: surfaces with constant Gaussian curvature and surfaces
that satisfy the relation $a\kappa_1+b\kappa_2=c$, where $\kappa_i$
are the principal curvatures, and $a,b$ and $c$ are constant.

\end{abstract}

%%%%%%%%%%%%%%%%%%%%%%%%%%%%%%%%%%
\section{Introduction}
%%%%%%%%%%%%%%%%%%%%%%%%%%%%%%%%%%

Let $\rlopezh^3$ be the three-dimensional hyperbolic space. A
parabolic group of isometries of $\rlopezh^3$ is formed by
isometries that leave fix one double point of the ideal boundary
$\rlopezs^2_{\infty}$  of $\rlopezh^3$. We say that a surface $S$ is
a {\it parabolic surface} of $\rlopezh^3$ if it is invariant by a
group of parabolic isometries. A surface $S$ in  $\rlopezh^3$ is
called a {\it Weingarten surface} if there is some (smooth) relation
$W(\kappa_1,\kappa_2)=0$ between its two principal curvatures
$\kappa_1$ and $\kappa_2$.  In particular, if $K$ and $H$  denote
respectively  the Gauss curvature and the mean curvature of  $S$, we
have a relation $U(K,H)=0$.  In this note we study parabolic
Weingarten surfaces that satisfy the simplest case for $W$ and $U$,
that is,  of linear type:
\begin{equation}\label{rlopez:wein1}
a\ H+b\ K=c,
\end{equation}
and
\begin{equation}\label{rlopez:wein2}
a\ \kappa_1+b\ \kappa_2=c
\end{equation}
where $a,b,c\in\rlopezr$.  We say in both cases that $S$ is a {\it
linear} Weingarten surface.  In the set of linear Weingarten
surfaces, we mention  three families of surfaces that correspond
with trivial choices of the constants $a,b$ and $c$: surfaces with
constant Gauss curvature ($a=0$ in (\ref{rlopez:wein1})), surfaces
with constant mean curvature ($b=0$ in (\ref{rlopez:wein1}) or $a=b$
in (\ref{rlopez:wein2})) and umbilical surfaces ($a=-b$ and $c=0$ in
(\ref{rlopez:wein2})).
 Although these three kinds of surfaces have been studied
in the literature, the  classification of linear Weingarten surfaces
in the general case is almost completely open today.

A way  to seek linear Weingarten surfaces is focusing in rotational
surface because in such case, equations (\ref{rlopez:wein1}) and
(\ref{rlopez:wein2}) reduce into an ordinary differential equation.
In hyperbolic ambient, rotational linear Weingarten surfaces  have
been studied when the mean curvature is constant \cite{rlopez:cd},
in arbitrary dimension \cite{rlopez:le,rlopez:mo,rlopez:pa} or in
the spherical case \cite{rlopez:st,rlopez:yo}.

In this note we give a complete description and classification of
parabolic surfaces in $\rlopezh^3$ that satisfy equation
(\ref{rlopez:wein1}) when $a=0$ (constant Gaussian curvature) and
equation (\ref{rlopez:wein2}). A more detailed study can see in
\cite{rlopez:lo1} and \cite{rlopez:lo2}. Among the facts of our
interest, we ask whether the surface can be extended to be complete,
which it is given in terms of the generating curve, and whether the
surface is embedded.

%%%%%%%%%%%%%%%%%%%%%%%%%%%%%%
\section{Preliminaries}
%%%%%%%%%%%%%%%%%%%%%%%%%%%%%%

 Let us consider the
upper half-space model of the hyperbolic three-space $\rlopezh^3$,
namely,
$${\rlopezh}^{3}=:{\rlopezr}^3_+=\{(x,y,z)\in\rlopezr^3;z>0\}$$
equipped with the metric
$$\langle,\rangle=\frac{dx^{2}+dy^{2}+dz^{2}}{z^{2}}.$$
In what follows, we will  use the words  "vertical" or "horizontal"
in the usual affine sense of $\rlopezr^3_+$. The ideal boundary
$\rlopezs^2_{\infty}$ of $\rlopezh^3$ is
$\rlopezs^2_{\infty}=\{z=0\}\cup\{\infty\}$, the
one-compactification of the plane $\{z=0\}$.  The asymptotic
boundary of a set $\Sigma\subset\rlopezh^3$ is defined as
$\partial_{\infty}\Sigma=\overline{\Sigma}\cap \rlopezs^2_{\infty}$,
where $\overline{\Sigma}$ is the closure of $\Sigma$ in $\{z\geq
0\}\cup\{\infty\}$.

Let $G$ be a parabolic group  of isometries  of $\rlopezh^3$.
Without loss of generality, we  take the point  $\infty$ of
$\rlopezs^2_{\infty}$ as the point that fixes $G$.
 Then the group $G$ is defined by the horizontal (Euclidean) translations in the
direction of a  horizontal vector $\xi$ with $\xi\in\{z=0\}$.  The
space of orbits is  represented in any geodesic plane orthogonal to
$\xi$. Throughout this note,  we assume that  $\xi=(0,1,0)$.

 A
surface $S$ invariant by $G$ intersects  $P=\{(x,0,z);z>0\}$ in a
curve $\alpha$ called the {\it generating curve} of $S$. Consider
$\alpha(s)=(x(s),0,z(s))$   parametrized by the Euclidean
arc-length, $s\in I$ and $I$ an open interval including zero. Then
$x'(s)=\cos\theta(s)$ and $z'(s)=\sin\theta(s)$ for a certain
differentiable function $\theta$, where the derivative $\theta'(s)$
of the function $\theta(s)$ is the Euclidean curvature of $\alpha$.
A parametrization of $S$ is $X(s,t)=(x(s),t,z(s))$, $t\in\rlopezr$.
The principal curvatures $\kappa_i$ of $S$ are
\begin{equation}\label{rlopez:k1k2}
\kappa_1(s,t)=z(s)\theta'(s)+\cos\theta(s),\hspace*{1cm}\kappa_2(s,t)=\cos\theta(s),
\end{equation}
and the Gauss curvature $K$ is $K=\kappa_1\kappa_2-1$. Exactly
$\kappa_1$ is the hyperbolic curvature of the curve $\alpha$. Thus a
parabolic surface $S$ in $\rlopezh^3$ is given by a curve
$\alpha=(x(s),0,z(s))$ whose coordinate functions satisfy
\begin{equation}\label{rlopez:alpha}
 \left\{\begin{array}{lll}
 x'(s)&=& \displaystyle \cos\theta(s)\\
 z'(s)&=&\displaystyle \sin\theta(s)
\end{array}
\right.
\end{equation}
together the equation
\begin{equation}\label{rlopez:hk}
K=z(s)\cos\theta(s)\theta'(s)-\sin\theta(s)^2.
\end{equation}
if the Gaussian curvature $K$ is constant or
\begin{equation}\label{rlopez:wein22}
a z(s)\theta'(s)+(a+b)\cos\theta(s)=c
\end{equation}
 if $S$ satisfies the Weingarten relation
 (\ref{rlopez:wein2}). After an isometry of the ambient space formed by a
horizontal  translation orthogonal to $\xi$ followed by a
dilatation, we consider the initial conditions
\begin{equation}\label{rlopez:eq2}
x(0)=0,\hspace*{.5cm}z(0)=1,\hspace*{.5cm}\theta(0)=\theta_0.
\end{equation}

As a consequence of the uniqueness of solutions of an ordinary
differential equation, we have

\begin{lemma}\label{rlopez:si1}
Let $\alpha$ be a solution of the initial value problem
(\ref{rlopez:alpha})-(\ref{rlopez:hk}) or
(\ref{rlopez:alpha})-(\ref{rlopez:wein22}). Let $s_0\in I$.
\begin{enumerate}
\item If $z'(s_0)=0$, then $\alpha$
is symmetric with respect to the vertical line $x=x(s_0)$ of the
$xz$-plane.
\item If $\theta'(s_0)=0$, then  $\alpha$ is a
straight-line.
\end{enumerate}
\end{lemma}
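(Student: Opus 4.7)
The plan is to invoke the uniqueness of solutions to the ODE systems (\ref{rlopez:alpha})--(\ref{rlopez:hk}) and (\ref{rlopez:alpha})--(\ref{rlopez:wein22}): for each conclusion I will exhibit a candidate curve that agrees with $\alpha$ at $s_0$ in position and tangent direction and solves the same system, whence the candidate and $\alpha$ must coincide on $I$.

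For part (1), the hypothesis $z'(s_0)=0$ forces $\sin\theta(s_0)=0$, so $\theta(s_0)\in\{0,\pi\}\pmod{2\pi}$. Consider the curve obtained by reflecting $\alpha$ across the vertical line $x=x(s_0)$, reparametrized so that $s_0$ is fixed:
\[
\beta(s)=\bigl(2x(s_0)-x(2s_0-s),\,0,\,z(2s_0-s)\bigr).
\]
A direct computation shows that $\beta$ is arc-length parametrized with angle function $\tilde\theta(s)=-\theta(2s_0-s)$ and derivative $\tilde\theta'(s)=\theta'(2s_0-s)$. Both (\ref{rlopez:hk}) and (\ref{rlopez:wein22}) involve $\theta$ only through $\cos\theta$, $\sin^2\theta$ and $\theta'$, each of which along $\beta$ at $s$ equals its value along $\alpha$ at $2s_0-s$; hence $\beta$ solves the same system as $\alpha$. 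At $s_0$ one has $\beta(s_0)=\alpha(s_0)$ and $\tilde\theta(s_0)=-\theta(s_0)\equiv\theta(s_0)\pmod{2\pi}$, so uniqueness yields $\beta\equiv\alpha$, which is precisely the claimed symmetry.

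For part (2), substituting $\theta'(s_0)=0$ into (\ref{rlopez:hk}) or (\ref{rlopez:wein22}) produces the algebraic identity $K=-\sin^2\theta(s_0)$, respectively $c=(a+b)\cos\theta(s_0)$. I then take $\tilde\alpha$ to be the straight segment through $\alpha(s_0)$ with constant angle $\tilde\theta\equiv\theta(s_0)$, namely
\[
\tilde\alpha(s)=\bigl(x(s_0)+(s-s_0)\cos\theta(s_0),\,0,\,z(s_0)+(s-s_0)\sin\theta(s_0)\bigr).
\]
Since $\tilde\theta'\equiv 0$, the right-hand side of (\ref{rlopez:hk}) evaluated on $\tilde\alpha$ collapses to $-\sin^2\theta(s_0)$, equal to $K$ by the identity above; the analogous check works for (\ref{rlopez:wein22}). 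Hence $\tilde\alpha$ solves the system with the same initial data as $\alpha$, and uniqueness forces $\alpha=\tilde\alpha$.

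The main technical concern is that solving either equation for $\theta'$ explicitly produces a right-hand side singular at $z=0$ and, for (\ref{rlopez:hk}), at $\cos\theta=0$; since $\alpha$ is a smooth solution on $I$ lying in $\{z>0\}$, one works on an open neighborhood of $s_0$ where Picard--Lindel\"of applies, and then propagates the identity $\beta\equiv\alpha$ or $\tilde\alpha\equiv\alpha$ to all of $I$ by connectedness. The degenerate sub-case $\cos\theta(s_0)=0$ in part (2), which forces $K=-1$, may warrant a brief separate treatment.
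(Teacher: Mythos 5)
Your argument is correct and is exactly the route the paper intends: the paper offers no written proof beyond the remark that the lemma ``is a consequence of the uniqueness of solutions of an ordinary differential equation,'' and your construction of the reflected curve $\beta$ and the line $\tilde\alpha$ as competing solutions with the same data at $s_0$ is the standard way to fill that in. The extra care you take with the angle being determined only mod $2\pi$ and with the degenerate case $\cos\theta(s_0)=0$ (which forces $K=-1$, where the equation factors as $z\theta'=-\cos\theta$ and is again regular) goes beyond what the paper records and is sound.
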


%%%%%%%%%%%%%%%%%%%%%%%%%%%%%%%%%%%%%%%%%%%%%%%%%%%%%%%%%%%%%%%%%%
\section{Parabolic surfaces with constant Gaussian curvature}\label{rlopez:sec3}
%%%%%%%%%%%%%%%%%%%%%%%%%%%%%%%%%%%%%%%%%%%%%%%%%%%%%%%%%%%%%%%%%%

 Let us assume that $S$ is a parabolic surface in $\rlopezh^3$ with
 constant Gauss curvature $K$. Then the generating curve $\alpha$
 satisfies (\ref{rlopez:alpha})-(\ref{rlopez:hk}).
 Consider $z'(s)$ as a function of the new variable $z(s)$. If we
put $p=z'$ and $x=z$, we have  $x p(x)p'(x)=K+p(x)^2$. Setting
$y=p^2$, we write $x y'(x)=2K+2y(x)$. The solutions of this equation
are $y(x)=K x^2-K$, that is,
\begin{equation}\label{rlopez:z2}
z'(s)^2=K(z(s)^2-1).
\end{equation}
A new differentiation in (\ref{rlopez:z2}) gives $ z''(s)= Kz(s)$,
whose solutions are  well known. With respect to the function
$x(s)$,   we   express $x(s)$ in terms of an elliptic integral from
the equality $x(s)=\int_0^s\sqrt{1-z'(t)^2}\ dt$.

\begin{enumerate}
\item Case $K>0$. The solution  is $z(s)=\cosh{(\sqrt{K}s)}$
whose domain is  $(-s_1,s_1)$ with
$$s_1=\frac{1}{\sqrt{K}} \mbox{arcsinh}(\frac{1}{\sqrt{K}}).$$
Moreover,  the behaviour of $\alpha$ at  the ends points of
$(-s_1,s_1)$ is
$$\lim_{s\rightarrow s_1}z(s_1)=\sqrt{\frac{1+K}{K}}\hspace*{1cm}
\lim_{s\rightarrow s_1}z'(s_1)=1.$$ The height of $S$, that is, the
hyperbolic distance between the horospheres at heights $z=z(s_1)$
and $z=z_0=1$  is
$$\frac12\log\left(\frac{K+1}{K}\right).$$

\item Case $K=0$. The solution is $\alpha(s)=(s,0,1)$, that is,
$\alpha$ is a horizontal straight-line and the surface is a
horosphere.

\item Case $K<0$. The solution is $z(s)=\cos\left(\sqrt{-K}
s\right)$.  Depending on the value of $K$, the generating curve
$\alpha$ meets $\rlopezs^2_{\infty}$. If $-1\leq K<0$, $\alpha$
intersects $\rlopezs^2_{\infty}$ making an angle such that
$\sin\theta_1=\sqrt{-K}$. The domain of $\alpha$ is
$(-\pi/2,\pi/2)$. In the particular case that $K=-1$, $\alpha$ is a
halfcircle that orthogonally meets $\rlopezs^2_{\infty}$. If $K<-1$,
$S$ is not complete and the curve $\alpha$ is a graph on an interval
of  $\rlopezs^2_{\infty}$. The parameter $s$ goes in the range
$(-\frac{1}{\sqrt{-K}}\mbox{arcsin}(\frac{1}{\sqrt{-K}}),\frac{1}{\sqrt{-K}}\mbox{arcsin}(\frac{1}{\sqrt{-K}})).$
Analogously as in the case $K>0$, the  height of the surface is
$$\frac12\log\left(\frac{K-1}{K}\right).$$

\end{enumerate}

\begin{theorem}
Let $\alpha$ be the generating curve of a parabolic surface $S$ in
hyperbolic space $\rlopezh^3$ with constant Gauss curvature $K$,
where $\alpha$ is  the solution of
(\ref{rlopez:alpha})-(\ref{rlopez:hk}).
 Assume that the initial velocity of $\alpha$ is a horizontal vector.
 Then we have:
\begin{enumerate}
\item Case $K>0$. The curve $\alpha$ is convex with exactly one minimum and
 it is a graph on $\rlopezs^2_{\infty}$ defined in some bounded interval
 $I=(-x_1,x_1)$. See Figure \ref{rlopez:fig1}, (a).
\item Case $K=0$. The curve $\alpha$ is a horizontal straight-line and $S$ is a
horosphere. See Figure \ref{rlopez:fig1}, (b).
\item Case $K<0$. The curve $\alpha$ is concave with exactly one maximum and
it  is a graph on $\rlopezs^2_{\infty}$ defined in some bounded
interval $I=(-x_1,x_1)$. If $-1\leq K<0$, the curve $\alpha$ meets
$\rlopezs^2_{\infty}$ making an angle $\theta_1$ with
$\sin\theta_1=\sqrt{-K}$. See Figure \ref{rlopez:fig2} (a). If
$K<-1$, $\alpha$ does not intersect $\rlopezs^2_{\infty}$ and at the
end points, the curve is vertical. See Figure \ref{rlopez:fig2} (b).

\end{enumerate}
In cases 1) and 3), the height of $S$ is $\frac12\log\left(\frac{K+1}{K}\right)$ and
$\frac12\log\left(\frac{K-1}{K}\right)$ respectively.

\end{theorem}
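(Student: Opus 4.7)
The plan is to read off the qualitative statements from the explicit solutions of (\ref{rlopez:z2}) already catalogued in the case analysis preceding the theorem, using Lemma \ref{rlopez:si1} to reduce to the half-curve $s \geq 0$. Since the initial velocity is horizontal, $\theta_0 = 0$ and so $z'(0) = \sin\theta_0 = 0$; hence part 1 of Lemma \ref{rlopez:si1} gives that $\alpha$ is symmetric about the vertical line $x = 0$, and it is enough to understand $\alpha$ on $[0, s_1)$.

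For case 1 ($K > 0$), I take $z(s) = \cosh(\sqrt{K}s)$ and differentiate to get $z''(s) = K z(s) > 0$; this yields Euclidean convexity and shows that $z'(s) = \sqrt{K}\sinh(\sqrt{K}s)$ vanishes only at $s = 0$, giving the unique minimum. To see $\alpha$ is a graph, I check that $\cos\theta(s) = \sqrt{1 - z'(s)^2} > 0$ on $(-s_1, s_1)$, since $|z'(s)| < 1$ strictly for $s < s_1$; hence $x(s)$ is strictly monotonic and $\alpha$ is the graph of a function on the bounded interval $I = (-x_1, x_1)$ with $x_1 = \int_0^{s_1}\sqrt{1 - z'(t)^2}\,dt$. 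The height follows from integrating the hyperbolic vertical line element from $z = 1$ to $z = z(s_1) = \sqrt{(K+1)/K}$. Case 2 ($K = 0$) is immediate: $z'' = 0$ together with $z'(0) = 0$, $z(0) = 1$ gives $z \equiv 1$; then (\ref{rlopez:hk}) forces $\theta \equiv 0$, so $x(s) = s$ and $S$ is a horosphere.

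For case 3 ($K < 0$), $z(s) = \cos(\sqrt{-K}s)$ satisfies $z''(s) = K z(s) < 0$, so $\alpha$ is Euclidean-concave, with unique maximum at $s = 0$ because $z'(s) = -\sqrt{-K}\sin(\sqrt{-K}s)$ vanishes only at $s = 0$ in the relevant interval. The same argument as in case 1 shows $\alpha$ is a graph as long as $|z'| < 1$. The delicate point is to decide which obstruction ends the solution first: either $z$ reaches $0$ (the curve hits $\rlopezs^2_{\infty}$) or $|z'|$ reaches $1$ (the curve becomes Euclidean-vertical). Setting $\sqrt{-K}\,|\sin(\sqrt{-K}s)| = 1$ requires $\sqrt{-K} \geq 1$, so for $-1 \leq K < 0$ the zero of $z$ arrives first, at $s_1 = \pi/(2\sqrt{-K})$, with terminal slope $|z'(s_1)| = \sqrt{-K}$, which determines the asymptotic angle via $\sin\theta_1 = \sqrt{-K}$; for $K < -1$ the vertical condition arrives first, so $\alpha$ terminates with $\theta = \pm\pi/2$ and never reaches $\rlopezs^2_{\infty}$. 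The height in the latter subcase is again a vertical hyperbolic integral between $z = 1$ and $z = z(s_1)$.

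The main obstacle is really just the case split in case 3 together with a sign check to confirm that $z$ is strictly decreasing on $(0, s_1)$ so the endpoint $s_1$ is well defined. The qualitative claims (convexity/concavity, unique extremum, graph, asymptotic angle) are direct consequences of the explicit expressions for $z(s)$, and finiteness of $x_1$ reduces to noting that $\sqrt{1 - z'(t)^2}$ stays bounded and non-negative on $[0, s_1)$, being strictly positive on $[0, s_1)$ and vanishing only at the endpoint in the subcases where $\alpha$ ends vertically.
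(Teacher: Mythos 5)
Your proposal is correct and follows essentially the same route as the paper: the theorem is read off from the first integral $z'(s)^2=K(z(s)^2-1)$ and its explicit solutions $z=\cosh(\sqrt{K}s)$, $z\equiv 1$, $z=\cos(\sqrt{-K}s)$, with the symmetry lemma reducing everything to $s\ge 0$ and the maximal domain determined by whichever of $z=0$ or $|z'|=1$ occurs first. One remark: if you actually carry out the vertical integral in the subcase $K<-1$, the endpoint is $z(s_1)=\sqrt{(K+1)/K}<1$ and the height comes out as $\frac12\log\bigl(K/(K+1)\bigr)$ rather than the $\frac12\log\bigl((K-1)/K\bigr)$ stated in the theorem; this looks like a slip in the statement (the $K>0$ formula with $K$ replaced by $-K$), not a flaw in your method, but you should not leave that computation implicit.
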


\begin{figure}[htbp]\begin{center}
\includegraphics[width=5cm]{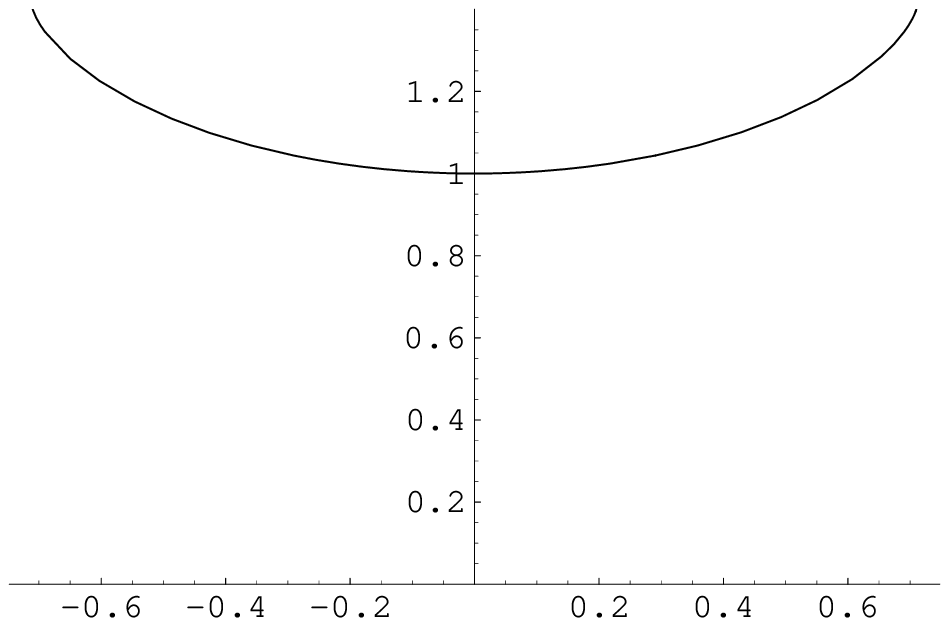}\includegraphics[width=5cm]{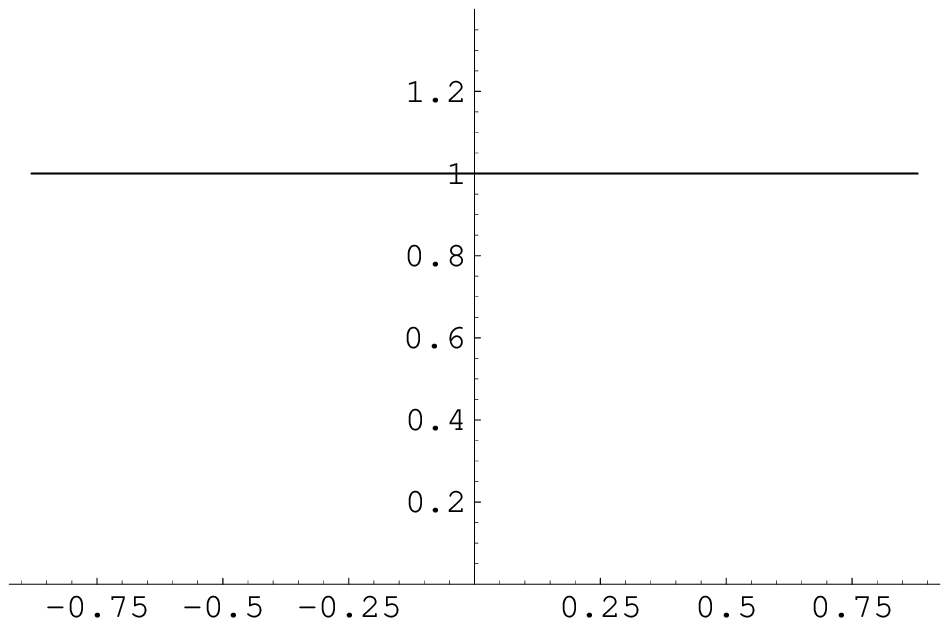}
\end{center}
\begin{center}(a)\hspace*{4cm}(b)\end{center}
\caption{The generating curves of parabolic surfaces with constant
Gaussian curvature $K$. The initial angle is $\theta(0)=0$. Case
(a): $K=1$; Case (b): $K=0$.}\label{rlopez:fig1}
\end{figure}

\begin{figure}[htbp]\begin{center}
\includegraphics[width=5cm]{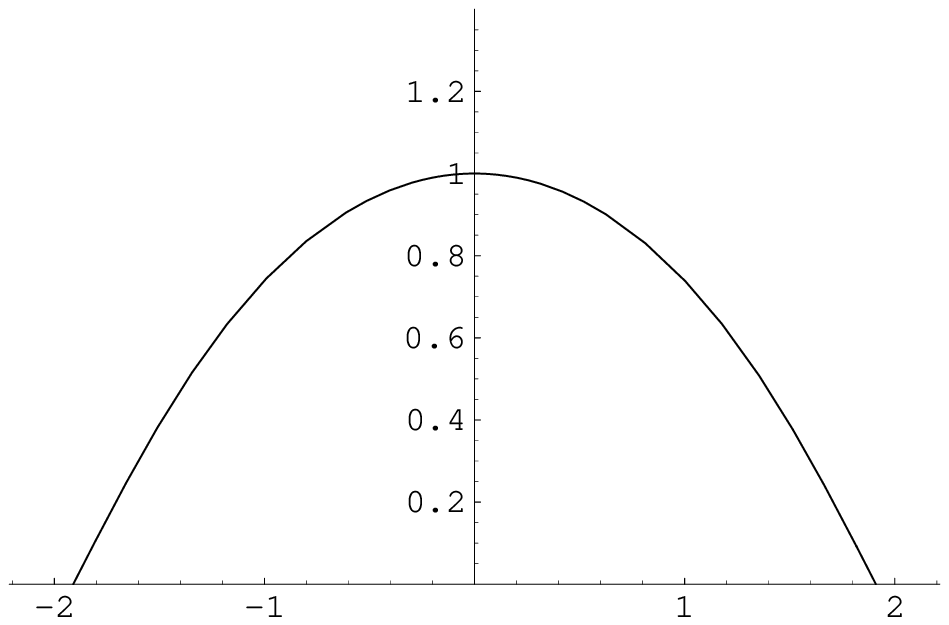}\includegraphics[width=5cm]{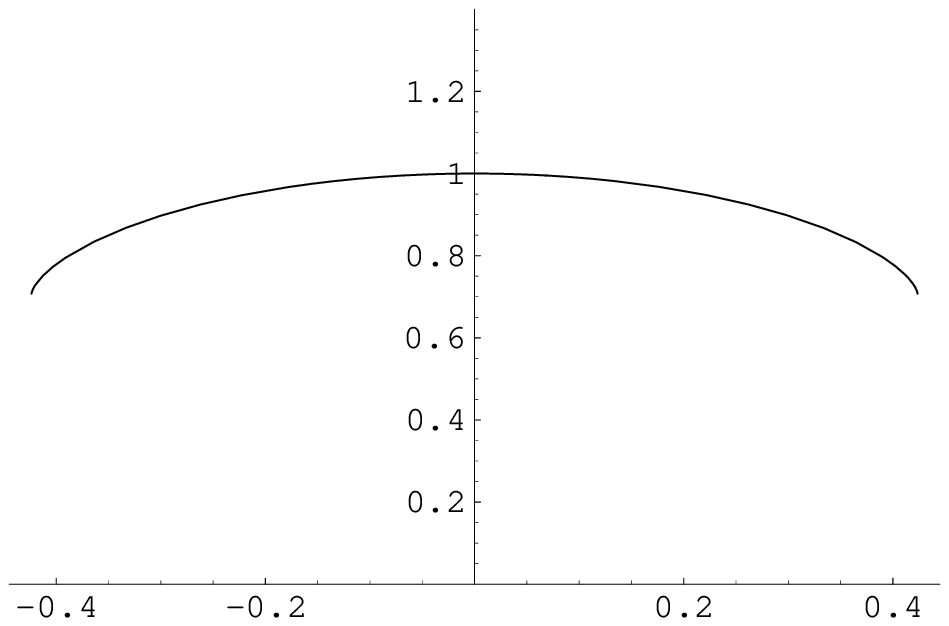}
\end{center}
\begin{center}(a)\hspace*{4cm}(b)\end{center}
\caption{The generating curves of parabolic surfaces with constant
Gaussian curvature $K$. The initial angle is $\theta(0)=0$. Case
(a): $K=-0.5$; Case (b): $K=-2$.}\label{rlopez:fig2}
\end{figure}

\begin{rlopezcorollary}For each number $K$ with $-1\leq K<0$, there exists a non-umbilical
complete parabolic surface in $\rlopezh^3$ with constant Gauss
curvature $K$. For these surfaces, the asymptotic boundary is formed
by two circles tangent at  the point fixed by the group of parabolic
isometries.
\end{rlopezcorollary}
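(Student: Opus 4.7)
The plan is to derive this statement as a direct corollary of Case~3 of the preceding theorem combined with a short completeness check. For each $K\in[-1,0)$, that case already supplies the generating curve $\alpha(s) = (x(s), 0, z(s))$ with $z(s) = \cos(\sqrt{-K}\,s)$, defined on the maximal interval $s \in (-s_1, s_1)$ where $s_1 = \pi/(2\sqrt{-K})$, and satisfying $z(\pm s_1) = 0$. The surface $S$ is then $X(s,t) = (x(s), t, z(s))$, and it has constant Gauss curvature $K$ by construction.

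For completeness, I would use the fact that in these coordinates the first fundamental form of $S$ takes the conformal shape $(ds^2 + dt^2)/z(s)^2$. Any divergent path in $S$ must either drive $s$ to one of the endpoints $\pm s_1$ or send $|t|$ to $\infty$. In the first case the expansion $z(s) \sim \sqrt{-K}\,(s_1 - s)$ near $s_1$ makes $\int ds/z(s)$ diverge logarithmically; in the second, $z$ is bounded away from $0$ on compact $s$-subintervals and $\int dt/z$ grows linearly in $|t|$. Hopf--Rinow then gives completeness of $S$.

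Non-umbilicity follows from the formulas (\ref{rlopez:k1k2}) together with Lemma~\ref{rlopez:si1}: the equality $\kappa_1 = \kappa_2$ would force $\theta' \equiv 0$, which would make $\alpha$ a Euclidean straight line, contradicting $z(s) = \cos(\sqrt{-K}\,s)$. For the asymptotic boundary, the two endpoints $(\pm x_1, 0, 0)$ of $\alpha$ (with $x_1 = x(s_1)$ finite, as one reads off the elliptic integral expressing $x$) sweep, under the parabolic group $G$ of $y$-translations, the two parallel Euclidean lines $\{x = \pm x_1,\ z = 0\}$. Inside $\rlopezs^2_\infty = \{z = 0\} \cup \{\infty\}$, each such line together with $\infty$ becomes a topological circle through $\infty$, and parallelism of the lines translates to the statement that these circles share a common tangent direction at $\infty$, i.e.\ are tangent at the point fixed by $G$.

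The step I expect to require the most care is completeness, since a complete generating curve in the hyperbolic plane $P$ is not in itself sufficient---one must rule out Cauchy sequences in $S$ escaping through combined $(s,t)$ motion. However, the product-type structure $(ds^2+dt^2)/z(s)^2$ cleanly reduces the problem to the two one-variable length estimates above, so the obstacle is really bookkeeping rather than a substantive difficulty.
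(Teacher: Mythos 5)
Your proposal is correct and follows the same route as the paper, which derives the corollary directly from the explicit solution $z(s)=\cos(\sqrt{-K}\,s)$ in the case $-1\leq K<0$ (where the generating curve reaches $\rlopezs^2_{\infty}$ transversally at angle $\theta_1$ with $\sin\theta_1=\sqrt{-K}$); the paper leaves the completeness estimate and the two-tangent-circles identification implicit, and you have supplied exactly those details. One microscopic slip: for the divergence of $\int dt/z$ what you need is $z\leq 1$ (so $1/z\geq 1$), not that $z$ is bounded away from $0$, but the conclusion is unaffected.
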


\begin{theorem}
 Any non-umbilical parabolic surface in $\rlopezh^3$ with constant Gaussian curvature $K$ with
$K<-1$ or $K\geq 0$ and with a horizontal tangent plane is not
complete. Moreover, its asymptotic boundary is the point fixed by
the group of parabolic isometries.
\end{theorem}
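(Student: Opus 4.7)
The plan is to handle the two relevant ranges $K>0$ and $K<-1$ separately by exploiting the explicit description of the generating curve $\alpha$ obtained in the preceding case analysis. Observe first that the case $K=0$ is excluded by the non-umbilical hypothesis, since the only parabolic surface with $K=0$ is the horosphere, which is umbilical.

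For $K>0$, the generating curve satisfies $z(s)=\cosh(\sqrt{K}s)$ on the maximal interval $(-s_{1},s_{1})$, and the constraint $x'(s)^{2}+z'(s)^{2}=1$ forces $z'(s_{1})=1$ and $x'(s_{1})=\cos\theta(s_{1})=0$. Hence $\alpha$ terminates with a vertical tangent at the interior points $p_{\pm}=(\pm x_{1},0,\sqrt{(K+1)/K})\in\rlopezh^{3}$, and cannot be prolonged smoothly past these points: since $z''=Kz>0$ makes $z'$ strictly increasing, any $C^{1}$ prolongation in Euclidean arc-length would force $|z'|>1$, which is impossible. Next, because $z(s)\in[1,\sqrt{(K+1)/K}]$ is bounded above and bounded away from zero on $[0,s_{1}]$, the hyperbolic length $\int_{0}^{s_{1}}ds/z(s)$ is finite. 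Writing out the induced metric on $S$ from the parametrization $X(s,t)=(x(s),t,z(s))$ gives $z(s)^{-2}(ds^{2}+dt^{2})$, so every curve $t=\mathrm{const}$ in $S$ has finite length. A sequence $s_{n}\to s_{1}$ thus produces a Cauchy sequence on $S$; since $\alpha$ admits no smooth prolongation and $p_{\pm}\notin S$, this Cauchy sequence has no limit in $S$, proving incompleteness. For the asymptotic boundary, $\overline{\alpha}\subset\rlopezh^{3}$ is a compact set (contained in the box $|x|\le x_{1}$, $1\le z\le\sqrt{(K+1)/K}$), so the only ideal accumulation points of $S$ (which is obtained by translating $\alpha$ along the $y$-axis) come from $y\to\pm\infty$, both of which limit to $\infty\in\rlopezs^{2}_{\infty}$, the point fixed by the parabolic group.

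The case $K<-1$ is entirely parallel. The explicit solution $z(s)=\cos(\sqrt{-K}s)$ on $(-s_{1},s_{1})$ gives $z(s_{1})=\sqrt{(K+1)/K}\in(0,1)$ and $z'(s_{1})=-1$, so the same vertical-tangent obstruction to prolongation applies (now $z''=Kz<0$ makes $z'$ strictly decreasing). Because $z(s)\in[z(s_{1}),1]$ is bounded away from both $0$ and $\infty$ on $[0,s_{1}]$, the hyperbolic length of $\alpha$ is again finite, yielding the same incompleteness conclusion. The hypothesis $K<-1$ keeps $\overline{\alpha}$ strictly away from $\{z=0\}$ (in contrast to the range $-1\le K<0$, which is precisely why that range is excluded here), so $\overline{\alpha}\subset\rlopezh^{3}$ is compact and the asymptotic-boundary argument is identical.

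The step I expect to require the most care is the non-prolongability of $\alpha$ at the endpoint. The heuristic "$z'$ would exceed $1$" is the right intuition, but a cleaner argument can be read off (\ref{rlopez:hk}) itself: substituting the limiting value $\theta=\pi/2$ into that equation forces $K=-1$, contradicting both $K>0$ and $K<-1$. This explains analytically why (\ref{rlopez:hk}) becomes singular at $s_{1}$, why $\theta'(s)\to\infty$ as $s\to s_{1}$, and why the solution cannot be continued as a $C^{2}$ curve. Once inextensibility and finite hyperbolic length of $\alpha$ are in hand, the incompleteness of $S$ follows, and the compactness of $\overline{\alpha}$ inside $\rlopezh^{3}$ immediately yields $\partial_{\infty}S=\{\infty\}$.
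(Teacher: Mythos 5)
Your proposal is correct and follows essentially the same route as the paper: it reads the theorem off the explicit solutions $z(s)=\cosh(\sqrt{K}s)$ and $z(s)=\cos(\sqrt{-K}s)$, whose maximal domains are finite with vertical tangents at the endpoints, so the generating curve has finite intrinsic length, cannot be prolonged (the principal curvature $\kappa_1=(K+1)/\cos\theta$ blows up unless $K=-1$), and its closure stays in a compact box away from $\{z=0\}$, forcing $\partial_\infty S=\{\infty\}$. The details you supply (the induced metric $z^{-2}(ds^2+dt^2)$, the Cauchy-sequence argument, and the curvature blow-up at $\theta=\pi/2$) are exactly the fleshing-out the paper leaves implicit.
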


 Finally, we remark that if we want to have the complete classification of
 parabolic surfaces with constant Gaussian curvature, we must  change the
 starting angle $\theta_0$ in (\ref{rlopez:eq2}) in order to obtain
 all such surfaces. See \cite{rlopez:lo1}. In the range of value $K$, with $K\in (1,0)$,
 there exist non
complete parabolic surfaces and the asymptotic boundary of each such
surface is a circle of $\rlopezs^2_{\infty}$. In Figure
 \ref{rlopez:fig3},  we show two such parabolic surfaces with
 $\theta_0=\pi/4$. As conclusion of our study, we have

\begin{theorem}
 Any non-umbilical parabolic surface in $\rlopezh^3$ with constant Gaussian curvature $K$ with
$K<-1$ or $K\geq 0$ is not complete. Moreover, its asymptotic
boundary is the point fixed by the group of parabolic isometries.
\end{theorem}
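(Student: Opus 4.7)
The plan is to extend the phase-plane argument of Section \ref{rlopez:sec3} to an arbitrary initial angle $\theta_0$ and then distinguish the possible regimes of $K$ allowed by the hypothesis. Using $x'=\cos\theta$, $z'=\sin\theta$ and $z''=\cos\theta\cdot\theta'$ in equation (\ref{rlopez:hk}) yields $z(s)z''(s)-z'(s)^2=K$. Viewing $p=z'$ as a function of $z$ reduces this to a separable linear equation in $p^2$ whose integration, together with the initial conditions (\ref{rlopez:eq2}), produces the first integral
\[
z'(s)^2=C\,z(s)^2-K,\qquad C=K+\sin^2\theta_0,
\]
which generalizes (\ref{rlopez:z2}).

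I would then split into three cases. (i) If $K>0$ then $C>0$, and the compatibility conditions $z'^2\geq 0$ and $|z'|\leq 1$ trap the generating curve between the horospheres $z=\sqrt{K/C}$ and $z=\sqrt{(K+1)/C}$, the upper bound being attained at a finite Euclidean arc length $s_1$ with vertical tangent $|\theta|=\pi/2$. (ii) If $K=0$ then non-umbilicity forces $\theta_0\neq 0$, so $C=\sin^2\theta_0>0$, and the first integral gives $z(s)=e^{s\sin\theta_0}$: one end reaches $|z'|=1$ at a finite $s_1$, while at the other end $s\to\mp\infty$ with $z\to 0$ and $\cos\theta\to 1$, forcing $|x|\to\infty$. (iii) If $K<-1$ then $C\leq K+1<0$, and the same inequalities (reversed because $C<0$) confine $\alpha$ to $\sqrt{(K+1)/C}\leq z\leq\sqrt{K/C}$, again with vertical tangent at a finite $s_1$.

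For non-completeness I would compute the hyperbolic length
\[
L=\int_{0}^{s_1}\frac{ds}{z(s)},
\]
which in every case is finite because $z$ is bounded below by a positive constant and $s_1<\infty$; thus $S$ contains a divergent curve of finite hyperbolic length. To verify that no smooth extension past $s_1$ is possible, I would rewrite the generating curve locally as a graph $x=g(z)$ near the vertical tangent; a direct computation gives the ODE $z\,g'\,g''=-(g'^2+1)\bigl(1+K(g'^2+1)\bigr)$, whose evaluation at $g'=0$ forces $K=-1$, contrary to the hypothesis. For the asymptotic boundary, the parabolic orbit already places $\infty\in\partial_\infty S$; cases (i) and (iii) give a positive lower bound on $z$, so $\alpha$ never approaches $\{z=0\}$, and in case (ii) the approach to $\{z=0\}$ occurs only as $|x|\to\infty$, which is again the single ideal point $\infty$. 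Hence $\partial_\infty S=\{\infty\}$.

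The main subtlety, I believe, lies in case (ii): here only one end of the curve hits a vertical tangent, so non-completeness must be drawn from that end alone, and one must simultaneously rule out that the opposite (infinite) end contributes a Euclidean line of $\{z=0\}$ together with its parabolic orbit to $\partial_\infty S$, rather than merely the fixed point $\infty$.
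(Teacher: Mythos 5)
Your proposal is correct and follows essentially the route the paper itself uses: the paper states this theorem without proof (deferring to \cite{rlopez:lo1}), but it is exactly the Section \ref{rlopez:sec3} analysis extended to arbitrary $\theta_0$, and your first integral $z'^2=(K+\sin^2\theta_0)z^2-K$ is the correct generalization of (\ref{rlopez:z2}), with the confinement between horospheres, the finite-length divergent end at a vertical tangent, and the graph computation forcing $K=-1$ all checking out (the sign in your equation for $g$ is an orientation convention and does not affect the conclusion at $g'=0$). The only points worth spelling out are that $z''=(K+\sin^2\theta_0)\,z$ is bounded away from $0$ on the confining strip, which is what guarantees the vertical tangent is reached at finite $s_1$ in cases (i) and (iii), and that non-umbilicity for $K=0$ is equivalent to $\sin\theta_0\neq 0$ (not merely $\theta_0\neq 0$).
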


\begin{rlopezcorollary} Any parabolic surface immersed in hyperbolic space
$\rlopezh^3$ with constant Gaussian curvature is a graph on
$\rlopezs^2_{\infty}$. In particular, it is embedded.
\end{rlopezcorollary}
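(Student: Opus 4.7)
The plan is to reduce the corollary to a statement about the generating curve $\alpha$ in the plane $P=\{y=0\}$. Since $S$ is invariant under the one-parameter group of Euclidean translations in the direction $\xi=(0,1,0)$, every horizontal line $(x_0,t,z_0)$, $t\in\rlopezr$, is either entirely contained in $S$ or disjoint from it. Consequently, $S$ is a (vertical) graph over a domain of $\rlopezs^2_{\infty}=\{z=0\}\cup\{\infty\}$ if and only if $\alpha$ is a graph over an interval of the line $\{z=0\}\cap P$, that is, the function $x(s)$ is injective on the maximal domain $I$. Once this is established, embeddedness follows for free because vertical graphs are automatically embedded.

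Next I would run through the possibilities produced by the classification in Theorems 3.1 and 3.3. For $\theta_0=0$, Theorem 3.1 already says explicitly that $\alpha$ is a graph on a bounded interval $(-x_1,x_1)$ in every subcase ($K>0$, $K=0$, $-1\le K<0$, and $K<-1$); the only feature to notice is that at the endpoints one may have $\theta\to\pm\pi/2$ (i.e.\ a vertical tangent) without destroying the graph property, since the monotonicity of $x(s)$ on each side of the unique critical point is strict. For a general initial angle $\theta_0\neq 0$, I would use Lemma~\ref{rlopez:si1}: the solution eventually reaches a parameter $s_0$ with $z'(s_0)=0$ (otherwise $z$ is strictly monotone, which contradicts $z''=Kz$ and the boundedness implied by $(z')^2=K(z^2-1)$ whenever $K\neq 0$), and at such $s_0$ the curve is symmetric with respect to the vertical line $x=x(s_0)$. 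A translation in $s$ therefore reduces the analysis to the case $\theta_0=0$ already treated.

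The horosphere case $K=0$ is trivial since $\alpha(s)=(s,0,1)$, and the umbilical case (totally geodesic vertical plane) is excluded by hypothesis in the preceding theorems or trivially a graph as well. Putting these ingredients together, $x(s)$ is strictly monotone on each side of the (unique) point where $z'$ vanishes, so $\alpha$ is an injective graph over its $x$-projection, and by the translational invariance $S$ is a graph over a strip of $\rlopezs^2_{\infty}$.

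The main obstacle I anticipate is purely bookkeeping: one must check that for every value of $K$ and every admissible $\theta_0$ the critical point of $x(s)$ (where $\theta=\pm\pi/2$) is reached only at the boundary of the maximal interval $I$, so that the curve does not "fold back" and create a second branch with the same $x$-value. This is where the explicit first integrals $z''=Kz$ and $(z')^2=K(z^2-1)$ derived in Section~\ref{rlopez:sec3} are crucial: they force $z$ to be monotone between consecutive zeros of $z'$, and together with Lemma~\ref{rlopez:si1} they preclude any additional critical point, thereby guaranteeing global injectivity of $x(s)$.
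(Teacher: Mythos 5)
Your opening reduction is correct: since $S$ is a union of horizontal lines in the direction $\xi$, it is a vertical graph over a domain of $\rlopezs^2_{\infty}$ exactly when $x(s)$ is injective along the generating curve, and for $\theta_0=0$ the graph property is indeed read off from the case analysis of Section \ref{rlopez:sec3}. The gap is in your treatment of a general initial angle. You claim that every solution reaches a parameter $s_0$ with $z'(s_0)=0$, so that Lemma \ref{rlopez:si1}(1) and a translation reduce everything to $\theta_0=0$. This is false, and the very first integral you invoke shows why: with the initial data (\ref{rlopez:eq2}) one obtains $z'(s)^2=(K+\sin^2\theta_0)\,z(s)^2-K$, and the identity $z'^2=K(z^2-1)$ of Section \ref{rlopez:sec3} holds \emph{only} for $\theta_0=0$. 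Whenever $K\le 0$ and $K+\sin^2\theta_0>0$ the right-hand side is strictly positive for every $z>0$, so $z'$ never vanishes and $z$ is strictly monotone on the whole maximal interval. This happens precisely for $K=0$ with $\theta_0\neq 0$ and for $-1<K<0$ with $\sin^2\theta_0>-K$ (the curves of Figure \ref{rlopez:fig3}, e.g. $K=-1/4$, $\theta_0=\pi/4$). These are genuinely new solutions, not congruent to any $\theta_0=0$ solution --- their asymptotic boundary is a circle rather than the fixed point --- which is exactly why the paper stresses that the classification is only complete after letting $\theta_0$ vary and defers that case to \cite{rlopez:lo1}. For these surfaces your symmetry reduction gives nothing, so your argument does not cover them.

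A uniform argument that closes the gap avoids the reduction altogether. From (\ref{rlopez:k1k2}) and $K=\kappa_1\kappa_2-1$ one has $\kappa_1=(K+1)/\cos\theta$ and $\theta'=(K+\sin^2\theta)/(z\cos\theta)$; hence for $K\neq -1$ the curvature blows up as $\theta\to\pm\pi/2$, so the maximal solution of (\ref{rlopez:alpha})--(\ref{rlopez:hk}) never attains $\cos\theta=0$ in the interior of its domain. Therefore $x'(s)=\cos\theta(s)$ keeps a constant sign, $x$ is strictly monotone, and $\alpha$ is a graph for every $K$ and every admissible $\theta_0$ (the umbilical case $K=-1$, where vertical geodesic planes occur, must be checked separately). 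That observation, rather than a symmetry reduction to $\theta_0=0$, is what makes the corollary follow from the classification; your proposal as written establishes it only for the surfaces of Theorems 3.1--3.3 with a horizontal tangent plane.
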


\begin{figure}[htbp]\begin{center}
\includegraphics[width=5cm]{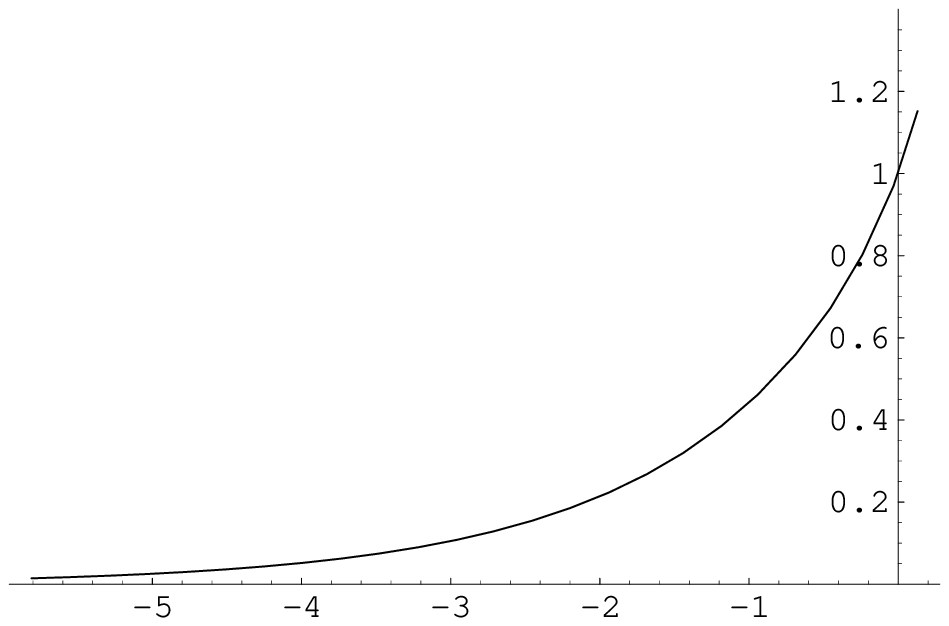}\includegraphics[width=5cm]{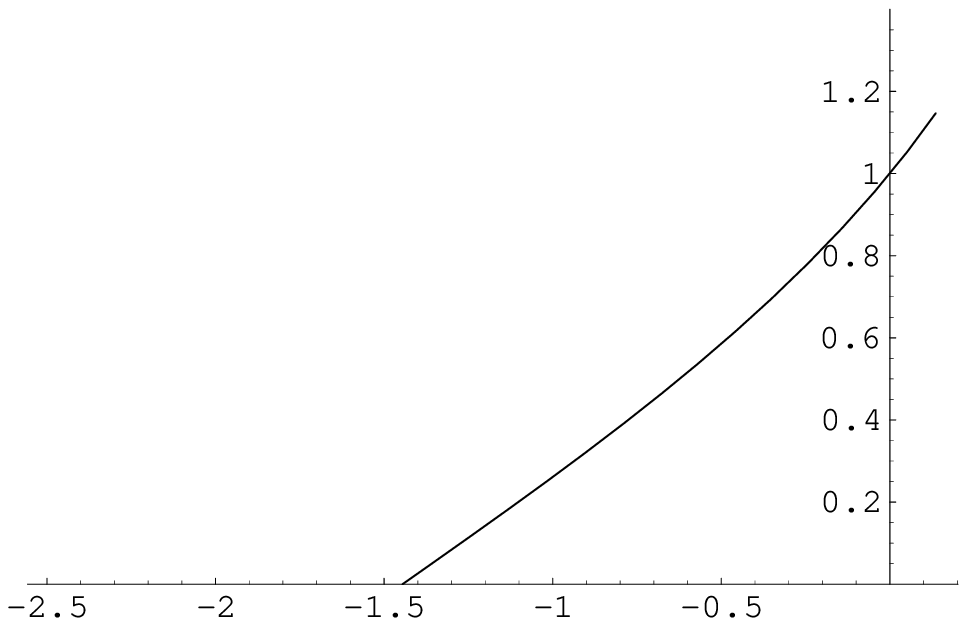}
\end{center}
\begin{center}(a)\hspace*{4cm}(b)\end{center}
\caption{The generating curves of parabolic surfaces with constant
Gaussian curvature $K$. The initial angle is $\theta(0)=\pi/4$. Case
(a): $K=0$; Case (b): $K=-1/4$.}\label{rlopez:fig3}
\end{figure}

%%%%%%%%%%%%%%%%%%%%%%%%%%%%%%%%%%%%%%%%%%%%%%%%%%%%%%%%%%%%%%%%%%%%%%%%%%%%%%%%%%%%%
% Figures can be included as in the following example. Use eps-files and place      %
% the labels at the end. Include your name in the label to make then unique         %
%
% \begin{figure}[ht]
% \includegraphics[height=13cm, angle=-90]{matching.eps}
% \caption{The search for corresponding points in two images can be done by
% maximizing a similarity measure, like for instance \emph{cross-correlation} based
% on some window in the gray scale images, along the epipolar line. In this way the
% ``best'' corresponding point is found.}
% \label{vandewoestyne:fig-matching}
% \end{figure}

%%%%%%%%%%%%%%%%%%%%%%%%%%%%%%%%%%%%%%%%%%%%%%%%%%%%%%%%%%%%%%%%%%%%%%%%%%%%%%%%%%%%%
\section{Linear Weingarten parabolic surfaces}
%%%%%%%%%%%%%%%%%%%%%%%%%%%%%%%%%%%%%%%%%%%%%%%%%%%%%%%%
In this section we shall consider parabolic surfaces that satisfy
the relation $a\kappa_1+b\kappa_2=c$. In the case that $a$ or $b$ is
zero, that is,  that one of the principal curvatures $\kappa_i$ is
constant, we have

\begin{theorem}
 The only parabolic surfaces in $\rlopezh^3$ with one constant principal curvature
are totally geodesic planes, equidistant surfaces, horospheres and
Euclidean horizontal right-cylinders.
\end{theorem}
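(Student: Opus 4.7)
The plan is to translate the hypothesis ``one principal curvature constant'' into an ODE for the generating curve $\alpha$ via (\ref{rlopez:k1k2}), solve it, and then read off the shape of the resulting parabolic surface.

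First I would handle the case $\kappa_2 \equiv c$. Since $\kappa_2(s,t) = \cos\theta(s)$ and $\theta$ is smooth, this forces $\theta \equiv \theta_0$ constant, and integrating (\ref{rlopez:alpha}) with the initial conditions (\ref{rlopez:eq2}) shows that $\alpha$ is a Euclidean straight line in the $xz$-plane. Translating such a line along $\xi = (0,1,0)$ gives a horizontal plane (horosphere) when $\theta_0 = 0$, a vertical plane (totally geodesic plane) when $|\theta_0| = \pi/2$, and otherwise a slanted half-plane meeting $\rlopezs^2_\infty$ along a horizontal straight line, i.e.\ an equidistant surface.

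Next I would handle the case $\kappa_1 \equiv c$, which by (\ref{rlopez:k1k2}) reads $z(s)\theta'(s) = c - \cos\theta(s)$. If $\theta'(s_0) = 0$ for some $s_0$, then Lemma \ref{rlopez:si1}(2) --- applied to the Weingarten relation (\ref{rlopez:wein22}) in the case $a=1$, $b=0$ --- forces $\alpha$ to be a Euclidean straight line and we fall back into the first case. Otherwise $\theta'$ has constant sign, so both $z$ and $x$ may be regarded as functions of $\theta$, and the separable ODE
\[
\frac{dz}{d\theta} = \frac{z\sin\theta}{c-\cos\theta}
\]
integrates to $z = A\,(c - \cos\theta)$ for some nonzero constant $A$. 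Substituting back gives $dx/d\theta = A\cos\theta$, hence $x = A\sin\theta + B$. Eliminating $\theta$ yields $(x - B)^2 + (z - Ac)^2 = A^2$, so $\alpha$ is an arc of a Euclidean circle of radius $|A|$ centered at $(B,0,Ac)$. Sweeping this arc in the $y$-direction produces a Euclidean horizontal right-cylinder.

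The only step that requires care is the degeneracy $\theta'(s_0)=0$ in the second case, disposed of by Lemma \ref{rlopez:si1}; once cleared, the separable ODE applies on the whole domain and the identification of $\alpha$ as either a Euclidean line or a Euclidean circle is a one-line integration. Matching each analytic form of $\alpha$ with the corresponding geometric surface (totally geodesic plane, equidistant surface, horosphere, or Euclidean cylinder) is then immediate from the standard description of these four families in the upper half-space model.
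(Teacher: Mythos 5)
Your proof is correct and reaches the same classification, but the mechanism in the case $\kappa_1\equiv c$ is genuinely different from the paper's. The paper differentiates the relation $z(s)\theta'(s)=c-\cos\theta(s)$ once and, using $z'=\sin\theta$, the terms $\theta'\sin\theta$ cancel to leave $z(s)\theta''(s)=0$, hence $\theta''\equiv 0$; since $\theta'$ is the Euclidean curvature of $\alpha$, the curve is at once a Euclidean line or circle, with no integration and no case distinction on the sign of $\theta'$. You instead first dispose of the degenerate possibility $\theta'(s_0)=0$ via Lemma \ref{rlopez:si1}(2) --- a legitimate application, since $\kappa_1=c$ is the relation (\ref{rlopez:wein22}) with $a=1$, $b=0$ --- and then integrate the separable equation in the variable $\theta$, obtaining $z=A(c-\cos\theta)$, $x=A\sin\theta+B$. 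What your route buys is the explicit geometry: the center $(B,0,Ac)$ and radius $|A|$ of the circle, from which one can read off directly whether the cylinder lies in $z>0$; what it costs is the extra appeal to the uniqueness lemma and the inversion $s\mapsto\theta$. The treatment of $\kappa_2\equiv c$ and the identification of each curve with the four families of surfaces coincide with the paper's.
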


\begin{proof} We use (\ref{rlopez:k1k2}). If $\kappa_1=c$, then $\theta'(s)
z(s)=c-\cos\theta(s)$. By differentiation of this expression and
using (\ref{rlopez:alpha}) we obtain $\theta''(s)=0$ for all $s$.
Then $\theta'$ is constant and hence that  from the Euclidean
viewpoint, the curve is a piece of a straight-line or a circle. If
$\kappa_2$ is constant, then  $\cos\theta(s)=c$ and this  means that
$\theta$ is constant. Thus  $\alpha$ is a straight-line.
\end{proof}

We write  the general case (\ref{rlopez:wein2})  as
\begin{equation}\label{rlopez:w1} \kappa_1=m\kappa_2+n
\end{equation}
where $m,n\in\rlopezr$, $m\not=0$. By using (\ref{rlopez:k1k2}),
Equation (\ref{rlopez:w1}) writes as
\begin{equation}\label{rlopez:w11}
\theta'(s)=\frac{(m-1)\cos\theta(s)+n}{z(s)}.
\end{equation}
After a change of orientation on the surface, we suppose in our
study that $n\geq 0$. We discard the trivial cases of Weingarten
surfaces, that is,  $(m,n)=(1,0)$ and   $m=-1$. We consider that the
starting angle $\theta_0$ in (\ref{rlopez:eq2}) is $\theta_0=0$.
Equation (\ref{rlopez:w11}) yields at $s=0$, $\theta'(0)=n+m-1$. By
Lemma \ref{rlopez:si1}, if $\theta'(0)\not=0$, then $\theta(s)$ is a
 monotonic function on $s$. Let $(-\bar{s},\bar{s})$ be the
maximal domain of solutions of
(\ref{rlopez:alpha})-(\ref{rlopez:w11}) under the initial conditions
(\ref{rlopez:eq2}) and denote
$\bar{\theta}=\lim_{s\rightarrow\bar{s}}\theta(s)$. Depending on the
sign of $\theta'(0)$, we consider three cases.

%%%%%%%%%%%%%%%%%%%%%%%%%%%%%%%%%%%%%%%%%%
\subsection{Case $n+m-1>0$}\label{rlopez:subsec1}
%%%%%%%%%%%%%%%%%%%%%%%%%%%%%%%%%%%%%%%%%%

As $\theta'(0)>0$,  $\theta(s)$ is a strictly increasing
function.

\begin{enumerate}
\item Subcase $m<n+1$. In particular, $n>0$.
 We prove that $\theta$ attains the value $\pi/2$.
Assume on the contrary that $\bar{\theta}\leq\pi/2$ and we will
arrive to a contradiction. As $z'(s)=\sin\theta(s)>0$,  $z(s)$ is
strictly increasing in $(0,\bar{s})$. Then $z(s)\geq z_0$ and  the
derivatives of $\{x(s),z(s),\theta(s)\}$ in equations
(\ref{rlopez:alpha})-(\ref{rlopez:w11}) are bounded. This means that
$\bar{s}=\infty$. As
$\lim_{s\rightarrow\infty}z'(s)=\sin\bar{\theta}>0$, then
$\lim_{s\rightarrow\infty}z(s)=\infty$.  Multiplying in
(\ref{rlopez:w11}) by $\sin\theta$ and integrating, we obtain
\begin{equation}\label{rlopez:integral2}
n+\cos\theta(s)=\frac{2-m}{z(s)}\int_0^s
\left(\sin\theta(t)\cos\theta(t)\right)\ dt+\frac{n+1}{z(s)}.
\end{equation}
Let $s\rightarrow\infty$ in (\ref{rlopez:integral2}). If the
integral that appears in (\ref{rlopez:integral2}) is bounded, then
$n+\cos\bar{\theta}=0$, that is, $\cos\bar{\theta}=n=0$:
contradiction. If the integral is not bounded, and using the
L'H\^{o}pital's rule, $n+\cos\bar{\theta}=(2-m)\cos\bar{\theta}$,
that is, $(m-1)\cos\bar{\theta}+n=0$. Then $m-1\leq 0$ and the
hypothesis $n+m-1>0$ yields $\cos\bar{\theta}= n/(1-m)>1$:
contradiction.

Therefore, there exists a first value $s_1$ such that
$\theta(s_1)=\pi/2$.  We prove that $\theta(s)$ attains the value
$\pi$. By contradiction, we assume
  $\bar{\theta}\leq\pi$ and  $z(s)$
is strictly increasing again. We then have $\bar{s}=\infty$ again
and $\theta'(s)\rightarrow 0$ as $s\rightarrow\infty$. If $z(s)$ is
bounded, then (\ref{rlopez:integral2}) implies
$(m-1)\cos\bar{\theta}+n=0$. As $m-1=n=0$ is  impossible, then
$m-1>0$ since $\cos\bar{\theta}<0$. But the hypothesis $m<n+1$
implies that $\cos\bar{\theta}=-n/(m-1)<-1$, which it is a
contradiction. Thus $z(s)\rightarrow\infty$ as $s\rightarrow\infty$.
By using (\ref{rlopez:integral2}) again, and letting
$s\rightarrow\infty$, we have $n+\cos\bar{\theta}=0$. In particular,
$0<m<2$. We obtain a second integral from (\ref{rlopez:w11})
multiplying by $\cos\theta(s)$:
$$\sin\theta(s)=\frac{s}{z(s)}+\frac{1}{z(s)}
\int_0^s\left( n\cos\theta(t)+(m-2)\cos^2\theta(t)\right)\ dt.$$ If
the integral is bounded, then $\sin^2\bar{\theta}=1$: contradiction.
Thus, the integral  is not bounded and L'H\^{o}pital rule implies
$\sin^2\bar{\theta}=1+n\cos\bar{\theta}+(m-2)\cos^2\bar{\theta}$.
This equation, together $n+\cos\bar{\theta}=0$ yields
$(m-2)\cos^2\bar{\theta}=0$: contradiction.

As conclusion,  there exists a first value $s_2$ such that
$\theta(s_2)=\pi$. By Lemma \ref{rlopez:si1}, the curve $\alpha$ is
symmetric with respect to the line $x=x(s_2)$.  By symmetry,
$\alpha$ is invariant by a group of horizontal translations
orthogonal to the orbits of the parabolic group.

\item Subcase $m\geq n+1$. With this hypothesis and as
$\theta'(s)>0$, Equation (\ref{rlopez:w11})
 implies that $\cos\theta(s)\not=-1$ for any $s$. Thus $-\pi <\theta(s)<\pi$.
For $s>0$, $z'(s)=\sin\theta(s)>0$ and then $z(s)$ is increasing on
$s$ and so, $\theta'(s)$ is a bounded function. This implies
$\bar{s}=\infty$.
 We show that either there exists $s_0>0$ such $\theta(s_0)=\pi/2$ or
$\lim_{s\rightarrow\infty}\theta(s)=\pi/2$.

As in the above subcase, and with the same notation,  if
$\theta(s)<\pi/2$ for any $s$, then $n+\cos\bar{\theta}=0$ or
$(m-1)\cos\bar{\theta}+n=0$.  As $\cos\bar{\theta}\geq 0$ and since
$m-1\geq n$, it implies that this occurs if and only if $n=0$ and
$\bar{\theta}=\pi/2$. In such case,
$z''(s)=\theta'(s)\cos\theta(s)>0$, that is, $z(s)$ is a convex
function.
 As conclusion, if
$n>0$, there exists a value  $s_0$ such that $\theta(s_0)=\pi/2$,
and there exists $\bar{\theta}\in (\pi/2,\pi]$ such that
$\lim_{s\rightarrow\infty}\theta(s)=\bar{\theta}$.
\end{enumerate}

\begin{theorem}\label{th1} Let $\alpha(s)=(x(s),0,z(s))$ be the generating curve of a
parabolic surface  $S$ in  $\rlopezh^3$ whose principal curvatures
satisfy the relation $\kappa_1=m\kappa_2+n$. Consider $n\geq 0$ and
that $\theta(0)=0$ in the initial condition (\ref{rlopez:eq2}).
Assume $n+m-1>0$.
\begin{enumerate}
\item Case $m<n+1$. Then $\alpha$ is invariant by a group of translations in the
$x$-direction. Moreover,  $\alpha$ has self-intersections and it
presents one maximum and one minimum in each period, with vertical
points  between maximum and minimum. See Figure \ref{rlopez:fig4}
(a).
\item Case $m\geq n+1$.
If $n>0$, then $\alpha$ has a minimum with self-intersections. See
Figure \ref{rlopez:fig4} (b). If $n=0$, then $\alpha$ is a convex
graph on $\rlopezs^2_{\infty}$, with a minimum. See Figure
\ref{rlopez:fig5} (a).
\end{enumerate}
\end{theorem}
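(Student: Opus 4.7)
The plan is to synthesize the limit analysis for $\theta(s)$ carried out in Subsection \ref{rlopez:subsec1} with the symmetry statement of Lemma \ref{rlopez:si1} to turn the differential information into the claimed global picture. The initial condition $\theta(0)=0$ gives $z'(0)=0$, so Lemma \ref{rlopez:si1}(1) yields that $\alpha$ is symmetric with respect to the vertical line $x=0$, and I can restrict throughout to $s\ge 0$. Since $\theta'(0)=n+m-1>0$ and $\theta'$ cannot vanish without forcing $\alpha$ to be a straight line (Lemma \ref{rlopez:si1}(2)), $\theta(s)$ is strictly increasing on the maximal interval of existence.

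For Case 1 ($m<n+1$), Subsection \ref{rlopez:subsec1} has already produced a first $s_2>0$ with $\theta(s_2)=\pi$. At that point $z'(s_2)=\sin\pi=0$, so Lemma \ref{rlopez:si1}(1) gives a second axis of symmetry $x=x(s_2)$. The composition of the reflections about $x=0$ and about $x=x(s_2)$ is the horizontal translation by $(2x(s_2),0,0)$, which generates an infinite cyclic translation group under which $\alpha$ is invariant. On $(0,s_2)$ the function $z$ is strictly increasing, so the minimum of $z$ is at $s=0$ and the maximum at $s=s_2$; the value $\theta(s_1)=\pi/2$, attained strictly inside $(0,s_2)$, gives the vertical tangent. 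Self-intersection follows because the sign change of $x'(s)=\cos\theta(s)$ at $s_1$ makes $x$ non-monotonic within one period: the curve first moves right of $x(s_1)$ at lower heights and then returns to the left of $x(s_1)$ at higher heights, so the arch over one period is not a graph over $x$, and the mirrored continuation for $s>s_2$ must cross it.

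For Case 2 ($m\ge n+1$), the subsection shows that either $\theta(s_0)=\pi/2$ is reached at finite $s_0$ with $\theta(s)\to\bar\theta\in(\pi/2,\pi]$ (when $n>0$), or $\theta(s)\nearrow\pi/2$ asymptotically without reaching it (when $n=0$). In the second situation $\cos\theta(s)>0$ for all $s\ge 0$, so $x'(s)>0$ and $\alpha$ is a graph over its horizontal projection; the convexity $z''(s)=\theta'(s)\cos\theta(s)>0$ was recorded in the subsection, and the minimum is clearly at $s=0$. In the first situation the minimum of $z$ is again at $s=0$, and self-intersection follows from
$$x(s)-x(s_0)=\int_{s_0}^{s}\cos\theta(t)\,dt\longrightarrow -\infty\qquad (s\to\infty),$$
since $\cos\theta(s)\to\cos\bar\theta<0$: the positive-$s$ branch eventually crosses $x=0$ and, by the mirror symmetry about $x=0$, meets the negative-$s$ branch. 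The main non-routine step is precisely this self-intersection argument; everything else is a direct packaging of the monotonicity and limit dichotomies from Subsection \ref{rlopez:subsec1}, with the crucial input being the strict inequality $\bar\theta>\pi/2$ when $n>0$, which is the output of the L'H\^opital argument in the subsection.
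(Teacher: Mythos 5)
Your proposal follows essentially the same route as the paper: the paper's actual proof of Theorem \ref{th1} is the analysis of Subsection \ref{rlopez:subsec1} (monotonicity of $\theta$, existence of $s_1$ with $\theta(s_1)=\pi/2$ and $s_2$ with $\theta(s_2)=\pi$ when $m<n+1$, and the dichotomy $\bar\theta=\pi/2$ versus $\bar\theta\in(\pi/2,\pi]$ when $m\ge n+1$), combined with Lemma \ref{rlopez:si1}. Your packaging of these facts --- symmetry about $x=0$ from $z'(0)=0$, the translation as the composition of the reflections about $x=0$ and $x=x(s_2)$, the location of the extrema and the vertical tangent at $s_1$, and the self-intersection in Case 2 via $x(s)-x(s_0)\to-\infty$ because $\cos\bar\theta<0$ --- is correct and is exactly what the paper leaves implicit.

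The one step that does not hold up as written is the self-intersection claim in Case 1. ``The arch over one period is not a graph over $x$, so the mirrored continuation must cross it'' is not an argument: failure to be a graph does not by itself produce a double point, and the arch on $[0,s_2]$ and its reflection on $[s_2,2s_2]$ already share the endpoint at $s_2$, so one must exhibit a \emph{second} common point. The clean way to finish is an intermediate-value argument whose form depends on the sign of $x(s_2)$. If $x(s_2)>0$, then since $x$ increases from $0$ to $x(s_1)>x(s_2)$ on $[0,s_1]$ there is $u^*\in(0,s_1)$ with $x(u^*)=x(s_2)$, and the reflection about $x=x(s_2)$ forces $\alpha(u^*)=\alpha(2s_2-u^*)$. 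If $x(s_2)\le 0$ one instead uses the reflection about $x=0$: either $x$ returns to the value $0$ at some $u\in(s_1,s_2]$, giving $\alpha(u)=\alpha(-u)$ with $u\neq -u$ (when $x(s_2)<0$), or $x(s_2)=0$ and the curve closes up, in which case the same reflection argument applied to the value $x(s_2)+\varepsilon$ attained twice on $(0,s_2)$ still yields a crossing with a translated copy. With this case analysis supplied, your proof is complete; without it, the central claim of Case 1 is asserted rather than proved.
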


%%%%%%%%%%%%%%%%%%%%%%%%%%%%%%%%%%%%%%%%%%%%%%%%%%%%%%%%%%%%%%%%%%

\subsection{Case $n+m-1=0$}\label{rlopez:subsec2}

%%%%%%%%%%%%%%%%%%%%%%%%%%%%%%%%%%%%%%%%%%%%%%%%%%%%%%%%%%%%%%%%%%

In the case that $n+m-1=0$ where $\theta'(0)=0$, and by Lemma
\ref{rlopez:si1}, $\theta(s)=0$ for any $s$.

\begin{theorem} Let $\alpha(s)=(x(s),0,z(s))$ be the generating curve of a
parabolic surface  $S$ in  $\rlopezh^3$. Assume that the principal
curvatures of $S$ satisfy the relation $\kappa_1=m\kappa_2+n$ with
$n+m-1=0$ and $n\geq 0$. If $\theta(0)=0$ in the initial condition
(\ref{rlopez:eq2}), then $S$ is  a horosphere.
\end{theorem}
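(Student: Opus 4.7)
My plan is to exploit the hypothesis $n+m-1=0$ together with the initial condition $\theta(0)=0$ to force $\theta'(0)=0$, and then appeal to the uniqueness part of Lemma \ref{rlopez:si1} to conclude that the generating curve is a horizontal straight-line, which in turn yields a horosphere.

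First I would evaluate equation (\ref{rlopez:w11}) at $s=0$. Using $z(0)=1$ and $\cos\theta(0)=1$, the right-hand side reduces to $(m-1)+n=n+m-1$, which vanishes by hypothesis. Thus $\theta'(0)=0$, and the second item of Lemma \ref{rlopez:si1} applies at $s_0=0$, giving that $\alpha$ is a Euclidean straight-line. Alternatively, one can argue directly from uniqueness of solutions to the ODE system (\ref{rlopez:alpha})--(\ref{rlopez:w11}) with initial data (\ref{rlopez:eq2}): the constant triple $\theta(s)\equiv 0$, $z(s)\equiv 1$, $x(s)=s$ solves the system, because $(m-1)\cos 0+n=n+m-1=0$, and uniqueness then identifies this with the solution determined by (\ref{rlopez:eq2}).

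From $\theta\equiv 0$ we read off $z'(s)=\sin\theta(s)\equiv 0$, hence $z(s)\equiv 1$, and $x'(s)=\cos\theta(s)\equiv 1$, hence $x(s)=s$. Therefore the generating curve is the horizontal straight-line $\alpha(s)=(s,0,1)$, and the parabolic surface itself is $X(s,t)=(s,t,1)$, namely the horizontal plane $\{z=1\}$ in the upper half-space model of $\rlopezh^3$. Such a plane is precisely a horosphere centered at the ideal point $\infty\in\rlopezs^2_{\infty}$, which is the point fixed by the parabolic group $G$.

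The only subtle point is that Lemma \ref{rlopez:si1} is stated in terms of a single vanishing of $\theta'$ forcing a global straight-line; this rests on smoothness and uniqueness for the ODE (\ref{rlopez:w11}), which is automatic since $z(s)>0$ along the curve makes the right-hand side smooth. I expect no further obstacle: the whole proof is essentially a one-line application of uniqueness once the initial slope has been computed.
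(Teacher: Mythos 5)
Your proposal is correct and follows the same route as the paper: the text preceding the theorem observes that $n+m-1=0$ forces $\theta'(0)=0$, invokes Lemma \ref{rlopez:si1} to get $\theta\equiv 0$, and concludes that the generating curve is the horizontal line at height $z=1$, so $S$ is a horosphere. Your alternative phrasing via uniqueness against the explicit constant solution is just an unwinding of the same lemma, not a different argument.
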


%%%%%%%%%%%%%%%%%%%%%%%%%%%%%%%%%%%%%%%%%%%%%%%%%%%%%%%%%%%%%%%%%%

\subsection{Case $n+m-1<0$}

%%%%%%%%%%%%%%%%%%%%%%%%%%%%%%%%%%%%%%%%%%%%%%%%%%%%%%%%%%%%%%%%%%

If $n+m-1<0$, $\theta(s)$ is a decreasing function. As $n\geq 0$ and
from (\ref{rlopez:w11}), $\cos\theta(s)\not=0$. This implies that
$\theta(s)$ is a bounded function with $-\pi/2<\theta(s)<\pi/2$. If
$\bar{s}=\infty$ and as $z(s)>0$, then both functions $\theta'(s)$
and $z'(s)$ go to $0$ as $s\rightarrow\infty$. By (\ref{rlopez:eq2})
and (\ref{rlopez:w11}), we have $(m-1)\cos\bar{\theta}+n=0$ and
$\sin\bar{\theta}=0$: contradiction. This proves that
$\bar{s}<\infty$.

As consequence, $z(s)\rightarrow 0$ since on the contrary,
$\theta'(s)$ would be bounded and $\bar{s}=\infty$. We now use
(\ref{rlopez:integral2}). Letting $s\rightarrow\bar{s}$ and by
L'H\^{o}pital rule again, we obtain $(m-1)\cos\bar{\theta}+n=0$,
that is,  $\cos\bar{\theta}\geq -n/(m-1)$. Finally,
$z''(s)=\theta'(s)\cos\theta(s)<0$, that is, $\alpha$ is concave.

\begin{theorem} \label{rlopez:th2} Let $\alpha(s)=(x(s),0,z(s))$
be the generating curve of a parabolic surface  $S$ in  $\rlopezh^3$
whose principal curvatures satisfy the relation
$\kappa_1=m\kappa_2+n$. Consider $n\geq 0$ and that  $\theta(0)=0$
in the initial condition (\ref{rlopez:eq2}). Assume $n+m-1<0$.  Then
$\alpha$  is a concave graph on  $\rlopezs^2_{\infty}$ with one
maximum and it intersects $\rlopezs^2_{\infty}$ with a contact angle
$\bar{\theta}$, $\cos\bar{\theta}= -n/(m-1)$. See Figure
\ref{rlopez:fig5} (b).
 \end{theorem}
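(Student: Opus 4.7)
The plan is to follow the strategy already outlined in the paragraph preceding the statement, organized as a sequence of ODE facts about the system (\ref{rlopez:alpha})--(\ref{rlopez:w11}). Since $z'(0)=\sin\theta(0)=0$, Lemma \ref{rlopez:si1} immediately gives that $\alpha$ is symmetric about the vertical line $x=0$, so I only need to analyze the branch $s>0$ and reflect. Also, $\theta'(0)=n+m-1<0$ together with Lemma \ref{rlopez:si1}(2) (no interior zero of $\theta'$, else $\alpha$ would be a straight line, inconsistent with $\theta'(0)\neq 0$) shows that $\theta$ is strictly decreasing on $(0,\bar s)$. The observation that $\cos\theta(s)=0$ would force $\theta'(s)=n/z(s)\geq 0$ in (\ref{rlopez:w11}), contradicting strict decrease, confines $\theta$ to $(-\pi/2,0)$. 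Consequently $\sin\theta<0$ and $z(s)$ is strictly decreasing from $z(0)=1$.

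Next I would prove that $\bar s<\infty$ by contradiction. If $\bar s=\infty$, then $\theta$ is monotone and bounded, so $\theta(s)\to\bar\theta\in[-\pi/2,0]$ and, because $\theta'$ has a one-sided sign, $\theta'(s)\to 0$. Likewise $z(s)$ is monotone and bounded below by $0$, so $z(s)\to\underline z\geq 0$ and $z'(s)=\sin\theta(s)\to\sin\bar\theta$ must vanish, forcing $\bar\theta=0$. Passing to the limit in (\ref{rlopez:w11}) then yields $(m-1)+n=0$, contradicting $n+m-1<0$. By maximality of the interval, some quantity must degenerate as $s\to\bar s$; since $\theta$ stays in $(-\pi/2,0)$, the only option is $z(s)\to 0$.

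To pin down the contact angle, I would use (\ref{rlopez:integral2}) rewritten as
\begin{equation*}
n+\cos\theta(s)=\frac{(2-m)\int_0^s\sin\theta(t)\cos\theta(t)\,dt+(n+1)}{z(s)}.
\end{equation*}
As $s\to\bar s$, the denominator $z(s)\to 0$; multiplying through shows the numerator tends to $0$ as well, so L'H\^opital's rule applies and gives $n+\cos\bar\theta=(2-m)\cos\bar\theta$, i.e.\ $(m-1)\cos\bar\theta+n=0$. The hypothesis $n+m-1<0$ with $n\geq 0$ forces $m<1$, so $\cos\bar\theta=n/(1-m)\in[0,1)$; in particular $\bar\theta\neq 0$, justifying the cancellation of $\sin\theta$ inside the L'H\^opital step. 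Concavity is immediate from $z''(s)=\theta'(s)\cos\theta(s)<0$, and the single maximum lies at $s=0$ by the symmetry established at the start.

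The only real obstacle is the step $\bar s<\infty$: one must rule out that $\theta$ and $z$ limit nicely and the solution extends indefinitely. The argument above handles this by observing that the ODE forces any such simultaneous limit to satisfy both $\sin\bar\theta=0$ and $(m-1)\cos\bar\theta+n=0$, which is incompatible with $n+m-1<0$. Everything else is bookkeeping with monotonicity, signs, and the integral identity (\ref{rlopez:integral2}).
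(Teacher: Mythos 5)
Your proposal is correct and follows essentially the same route as the paper: monotonicity of $\theta$ from the sign of $\theta'(0)$, confinement of $\theta$ by the non-vanishing of $\cos\theta$, the contradiction argument showing $\bar s<\infty$ and then $z(s)\to 0$, L'H\^opital applied to the integral identity (\ref{rlopez:integral2}) to get $(m-1)\cos\bar\theta+n=0$, and concavity from $z''=\theta'\cos\theta<0$. The only additions are cosmetic (the explicit symmetry about $x=0$ and the sharper bound $\theta\in(-\pi/2,0]$), and your justification that $\theta'\to 0$ when $\bar s=\infty$ is at the same level of informality as the paper's.
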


\begin{figure}[htbp]\begin{center}
\includegraphics[width=5cm]{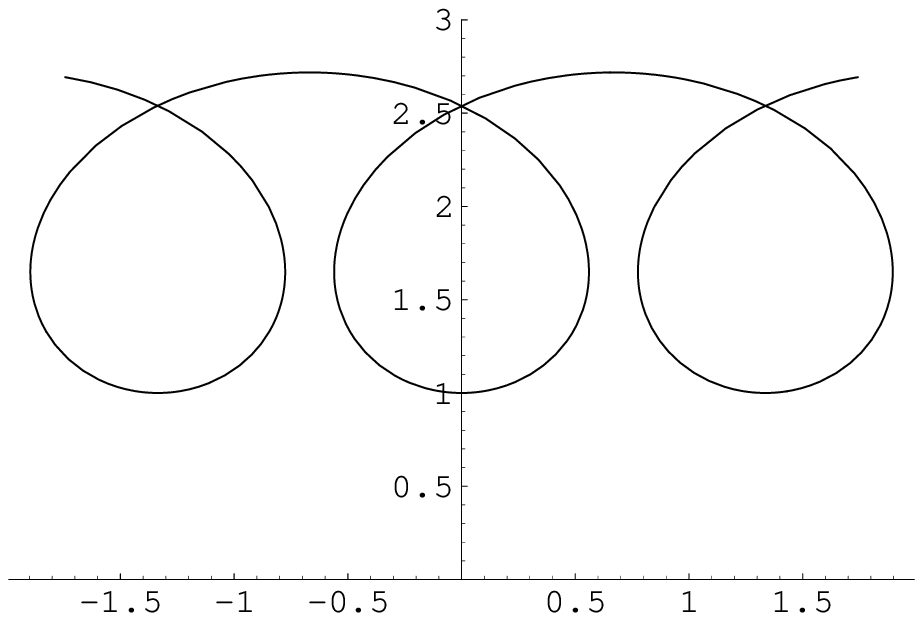}\hspace*{2cm}
\includegraphics[width=5cm]{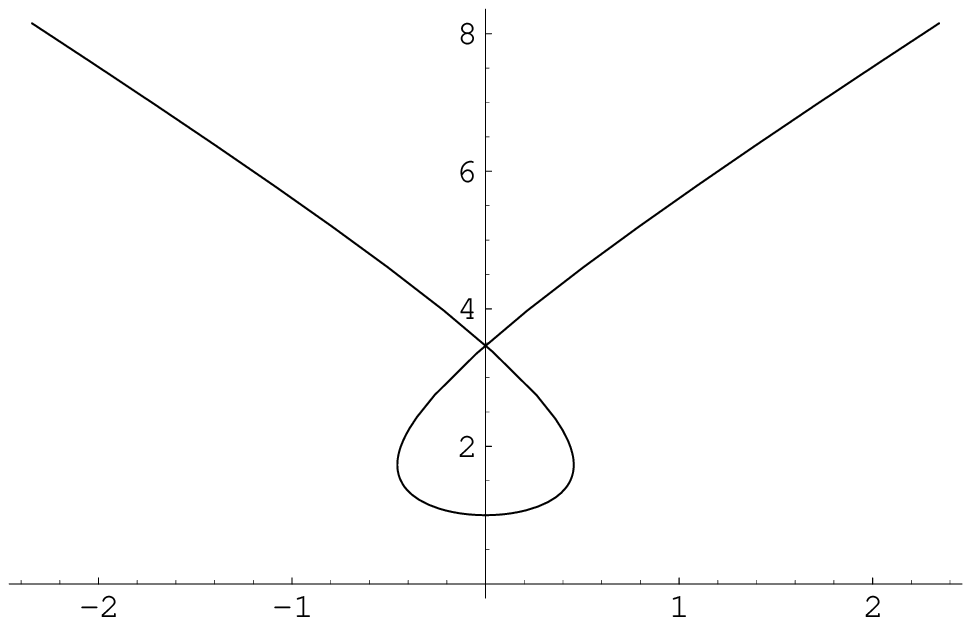}
\end{center}
\caption{The generating curves of a parabolic surfaces with
$\kappa_1=m\kappa_2+n$ and $n+m-1>0$. We consider in (a) the subcase
 $m<n+1$, with $m=1$ and $n=2$. In (b) we show the subcase $m\geq n+1$
 with $m=3$ and $n=1$. }\label{rlopez:fig4}
\end{figure}

\begin{figure}[htbp]\begin{center}
\includegraphics[width=5cm]{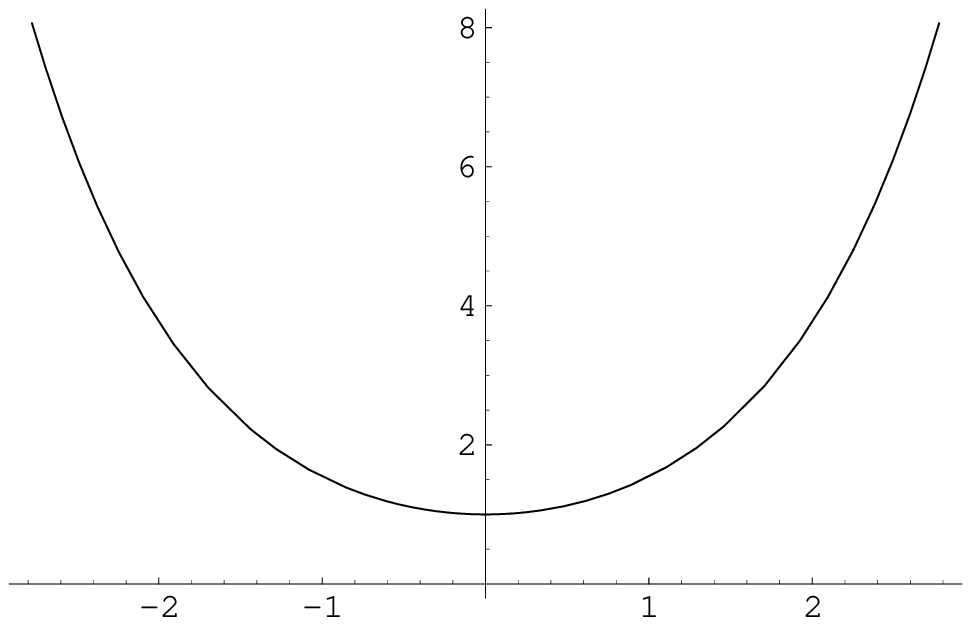}\hspace*{2cm}
\includegraphics[width=5cm]{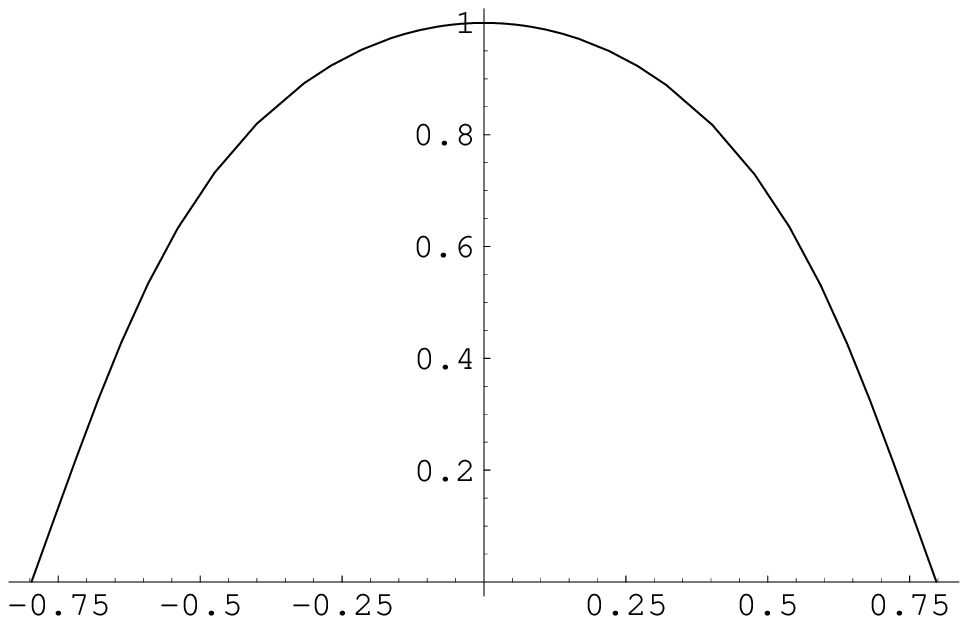}
\end{center}
\begin{center}(a)\hspace*{8cm}(b)\end{center}
\caption{The generating curves of a parabolic surfaces with
$\kappa_1=m\kappa_2+n$. We consider in (a) the case $n+m-1>0$ and
subcase $m\geq n+1$, with $m=2$ and $n=0$. In (b), we show the case
$n+m-1<0$ with $m=-2$ and $n=1$.
 }\label{rlopez:fig5}
\end{figure}

As it as  pointed out in the above Section \ref{rlopez:sec3}, the
classification of the parabolic surfaces in $\rlopezh^3$ that
satisfy the relation $\kappa_2=m\kappa_1+n$ finishes when we go
changing the initial angle $\theta_0$ in (\ref{rlopez:eq2}) in the
range $0\leq\theta_0\leq 2\pi$. For example, in the case studied in
subsection \ref{rlopez:subsec1}, that is, $n+m-1>0$, and subcase
$m<n+1$, the velocity vector $\alpha'(s)$ takes all values of the
interval $[0,2\pi]$. Thus, and using the uniqueness of solutions of
an ordinary differential equation, the case $\theta_0=0$  covers all
possibilities. In this way, we would have to consider all cases. As
an example, we focus in the case  of subsection
\ref{rlopez:subsec2}. We omit the proof.

\begin{theorem} Let $\alpha(s)=(x(s),0,z(s))$ be the generating curve of a parabolic
surface  $S$ in hyperbolic space $\rlopezh^3$. Assume that the
principal curvatures of $S$ satisfy the relation
$\kappa_1=m\kappa_2+n$ with $n+m-1=0$. If $\theta(0)\in (0,2\pi)$ in
the initial condition (\ref{rlopez:eq2}), then $\alpha$ is a curve
with self-intersections, with  one maximum and asymptotic to
$\rlopezs^2_{\infty}$ at infinity, that is,
 $\lim_{s\rightarrow\pm\infty}z(s)=0$.
 See Fig. \ref{rlopez:fig6}.

\end{theorem}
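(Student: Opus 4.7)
The plan is to integrate (\ref{rlopez:w11}) explicitly under the hypothesis $n+m-1=0$, which makes the numerator a multiple of $1-\cos\theta$. Since $n\geq 0$, $m\neq 1$ and $m=-1$ is discarded, we have $n=1-m$ with $1-m>0$, and (\ref{rlopez:w11}) reduces to
$$\theta'(s)=\frac{(1-m)(1-\cos\theta(s))}{z(s)}\geq 0,$$
with equality only when $\theta$ is an even multiple of $\pi$. The hypothesis $\theta_0\in(0,2\pi)$ gives $\theta'(0)>0$, and Lemma \ref{rlopez:si1}(2) together with ODE uniqueness prevents $\theta$ from reaching $0$ or $2\pi$ in the interior of its maximal interval $(\bar s_-,\bar s_+)$; hence $\theta(s)$ is strictly increasing with range in $(0,2\pi)$.

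Dividing $z'(s)=\sin\theta$ by $\theta'(s)$ and using $\sin\theta=2\sin(\theta/2)\cos(\theta/2)$ and $1-\cos\theta=2\sin^2(\theta/2)$, I separate variables to obtain $dz/z=\cot(\theta/2)\,d\theta/(1-m)$, which integrates to the first integral
$$z(s)=A\,\bigl[\sin(\theta(s)/2)\bigr]^{2/(1-m)},\qquad A=\bigl[\sin(\theta_0/2)\bigr]^{-2/(1-m)}>0.$$
From this I read off that $z(s)$ attains its unique maximum $A$ at the value $s_1$ where $\theta(s_1)=\pi$, and $z(s)\to 0$ exactly as $\theta(s)\to 0$ or $\theta(s)\to 2\pi$. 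At $s_1$, $z'(s_1)=\sin\pi=0$, so Lemma \ref{rlopez:si1}(1) yields the reflection symmetry of $\alpha$ about the vertical line $x=x(s_1)$; this confirms the maximum is unique and allows me to analyse only the ascending branch $s>s_1$. The asymptotic boundary claim then reduces to showing $\lim_{s\to\bar s_\pm}\theta(s)\in\{0,2\pi\}$, which I would establish by the L'H\^opital-style argument of subsection \ref{rlopez:subsec1}: multiply (\ref{rlopez:w11}) by $\sin\theta$ and by $\cos\theta$, integrate on $[0,s]$, and rule out any interior limit $\bar\theta_\pm\in(0,2\pi)$.

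For the self-intersection, I use the first integral to express
$$\frac{dx}{d\theta}=\frac{\cos\theta}{\theta'(s)}=\frac{A}{2(1-m)}\,\bigl[\sin(\theta/2)\bigr]^{2m/(1-m)}\cos\theta,$$
so that on the ascending branch ($\theta\in(\pi,2\pi)$) the coordinate $x(s)$ decreases on $(\pi,3\pi/2)$ and increases on $(3\pi/2,2\pi)$ while $z(s)$ strictly decreases from $A$ to $0$. Comparing the two horizontal excursions, which are incomplete Beta-type integrals in $\sin(\theta/2)$, shows that the descending arc re-enters the vertical slab already traced by the ascending arc and lies below it at some common $x$-value, forcing a self-intersection; the mirror self-intersection on the left branch follows from the reflection symmetry.

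The main obstacle is the comparison of the two horizontal excursions in the last step: their ratio depends transcendentally on $m$ through incomplete Beta integrals, so to obtain the required inequality uniformly for $m\in(-\infty,1)\setminus\{-1\}$ one must argue either via an auxiliary monotonicity of $x$ as a function of $z$ (using a first-order ODE for $x(z)$ on each branch) or via a convexity comparison. The other three statements of the theorem are direct consequences of the first integral combined with Lemma \ref{rlopez:si1}.
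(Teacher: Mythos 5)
The paper omits its own proof of this theorem, so your proposal can only be judged on its merits. The first half is fine: the first integral $z=A\,[\sin(\theta/2)]^{2/(1-m)}$ is correct (with $n=1-m>0$), and it does all the work for monotonicity, the unique maximum at $\theta=\pi$, the symmetry via Lemma \ref{rlopez:si1}, and the boundary behaviour --- indeed you do not even need the L'H\^opital argument of Subsection \ref{rlopez:subsec1}: if $\bar\theta_+<2\pi$ then the first integral keeps $z$ between positive bounds, so $\bar s_+=\infty$ and $\theta'\to(1-m)(1-\cos\bar\theta_+)/\bar z>0$ while $\theta$ stays bounded, a contradiction; hence $\theta\to2\pi$, $z\to0$, and symmetrically on the left.

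The gap you flag in the self-intersection step is genuine, and it cannot be closed on the whole admissible range because the inequality you need is false there. Normalize $x(s_1)=0$ at the maximum. By the reflection symmetry, the curve self-intersects if and only if the descending branch returns to the axis $x=0$ at a positive height, i.e.\ iff $L:=\lim_{\theta\to2\pi^-}x>0$ (or $=+\infty$). Your own formula for $dx/d\theta$ gives, after the substitution $v=\pi-\theta/2$ and the Wallis recurrence $\int_0^{\pi/2}\sin^{p+2}v\,dv=\frac{p+1}{p+2}\int_0^{\pi/2}\sin^{p}v\,dv$ with $p=\frac{2m}{1-m}$,
\begin{equation*}
L=\frac{A}{2(1-m)}\int_\pi^{2\pi}\left[\sin(\theta/2)\right]^{\frac{2m}{1-m}}\cos\theta\,d\theta
=-\frac{m}{1-m}\,A\int_0^{\pi/2}\left(\sin v\right)^{\frac{2m}{1-m}}dv ,
\end{equation*}
the integral being finite for $m>-1$ and $L=+\infty$ for $m<-1$. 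So $L$ has the sign of $-m$: for $m<0$ the self-intersection follows at once (this is the clean closure of your argument, and it covers the paper's example $m=-2$), but for $0<m<1$ (e.g.\ $m=n=1/2$, which satisfies $n+m-1=0$, $n\ge0$) one gets $L<0$, the two half-branches lie on opposite sides of the symmetry axis, each is a graph over the $z$-axis because $z$ is strictly monotone on it, and the curve is \emph{embedded}. No monotonicity or convexity comparison can rescue the step there; the conclusion requires $m<0$. A secondary issue, also untouched in your proposal: the arc length to the boundary is $\int^{2\pi}[\sin(\theta/2)]^{\frac{2m}{1-m}}d\theta$ up to a constant, which converges for $m>-1$, so the maximal interval $(\bar s_-,\bar s_+)$ is bounded there and the statement ``$\lim_{s\to\pm\infty}z(s)=0$'' is literally correct only for $m<-1$; for $-1<m<0$ one has $z\to0$ as $s\to\bar s_\pm$ with $\bar s_\pm$ finite.
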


\begin{figure}[htbp]\begin{center}
\includegraphics[width=7cm]{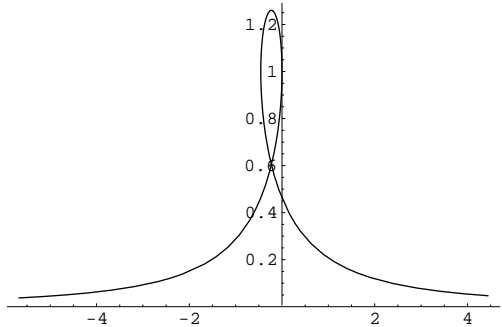}
\end{center}
\caption{The generating curve of a parabolic surface with
$\kappa_1=m\kappa_2+n$ and $n+m-1=0$. Here $m=-2$ and $n=3$. The
starting angle $\theta_0$ is $\theta_0=\pi/2$.}\label{rlopez:fig6}
\end{figure}

\end{document}